\newcommand{\norm}[1]{\left\Vert#1\right\Vert}
\newcommand{\abs}[1]{\left\vert#1\right\vert}
\renewcommand{\mid}{\,|\,}
\newcommand{\Mid}{\:\big | \:}
\DeclareMathOperator*{\esssup}{ess\,sup}
\begin{document}
\title{BSDEs on finite and infinite horizon with time-delayed generators}
\tnotetext[t]{We thank Michael Kupper for helpful comments and fruitful discussions}

\keyAMSClassification{(2000) 60H20, 28A25.}
\author[a,b,1,u]{Peng Luo}
\author[c,2,s]{Ludovic Tangpi}

 \address[a]{Shandong University, 27 Shanda Nanlu, 250100 Jinan, P.R. China}
 \address[b]{University of Konstanz, Universit\"atsstra\ss e.~10, D-78457 Konstanz, Germany}
 \address[c]{University of Vienna, Faculty of Mathematics, Oskar-Morgenstern-Platz 1, A-1090 Wien, Austria}

 \eMail[1]{peng.luo@uni-konstanz.de}
 \eMail[2]{ludovic.tangpi@univie.ac.at}

\myThanks[u]{Financial support from China Scholarship Council (File No. 201306220101), NSF of China (No.~11221061) and the Project 111 (No. B12023).}
\myThanks[s]{Financial support from Vienna Science and Technology Fund (WWTF) under grant MA 14-008.}

\abstract{
We consider a backward stochastic differential equation with a generator that can be subjected to delay, in the sense that its current value depends on the  weighted past values of the solutions, for instance a distorted recent average.
Existence and uniqueness results are provided in the case of possibly infinite time horizon for equations with, and without reflection.
Furthermore, we show that when the delay vanishes, the solutions of the delayed equations converge to the solution of the equation without delay. 
We argue that these equations are naturally linked to forward backward systems, and we exemplify a situation where this observation allows to derive results for quadratic delayed equations with non-bounded terminal conditions in multi-dimension.
}

\keyWords{Backward stochastic differential equation, delay measure, Weighting function, infinite horizon, barrier, FBSDE.}
\maketitle

\section{Introduction}
In \citet{Del-Imk2010,Del-Imk}, the theory of backward stochastic differential equations (BSDEs) was extended to BSDEs with time delay generators (delay BSDEs).
These are non-Markovian BSDEs in which the generator at each positive time $t$ may depend on the past values of the solutions.
This class of equations turned out to have natural applications in pricing and hedging of insurance contracts, see \citet{Delong}.

The existence result of \citet{Del-Imk}, proved for standard Lipschitz generators and small time horizon, has been refined by \citet{dosReis} who derived additional properties of delay BSDEs such as path regularity and existence of decoupled systems.
Furthermore, existence of delay BSDE constrained above a given continuous barrier has been established by \citet{zhou-ren} in a similar setup.
More recently, \citet{briand-elie} proposed a framework in which quadratic BSDEs with sufficiently small time delay in the value process can be solved.

In addition to the inherent non-Markovian structure of delay BSDEs, the difficulty in studying these equations comes from that the inter-temporal changes of the value and control processes always depend on their entire past, hence making it hard to obtain boundedness of solutions or even BMO-martingale properties of the stochastic integral of the control process.
This suggests that delay BSDEs can actually be solved forward and backward in time and in this regard, share similarities with forward backward stochastic differential equations (FBSDEs), see Section \ref{sec:fbsde} for a more detailed discussion.

The object of the present note is to study delay BSDEs in the case where the past values of the solutions are weighted with respect to some scaling function.
In economic applications, these weighting functions can be viewed as representing the perception of the past of an agent. 
For multidimentional BSDEs with possibly infinite time horizon, we derive existence, uniqueness and stability of delay BSDE in this weighting-function setting.
In particular, we show that when the delay vanishes, the solutions of the delay BSDEs converge to the solution of the BSDE with no delay, hence recovering a result obtained by \citet{briand-elie} for different types of delay.
Moreover, we prove that in our setting existence and uniqueness also hold in the case of reflexion on a c\`adl\`ag barrier.
We observe a link between delay BSDEs and coupled FBSDE and, based on the findings in \citet{qfbsde}, we derive existence of delay quadratic BSDEs in the case where only the value process is subjected to delay.
We refer to \citet{briand-elie} for a similar result, again for a different type of delay and in the one-dimensional case.

In the next section, we specify our probabilistic structure and the form of the equation, then present existence, uniqueness and stability results.
Sections \ref{sec:reflected} and \ref{sec:fbsde} are dedicated to the study of reflected delay BSDEs and the link to FBSDEs, respectively.

\section{BSDEs with time delayed generators}
\label{sec:1}

We work on a filtered probability space $(\Omega, {\cal F}, ({\cal F}_t)_{t \in [0,T]}, P)$ with $T \in (0, \infty]$.
We assume that the filtration is generated by a $d$-dimensional Brownian motion $W$, is complete and right continuous.
Let us also assume that ${\cal F} = {\cal F}_T$.
We endow $\Omega \times [0,T]$ with the predictable $\sigma$-algebra and $\mathbb{R}^k$ with its Borel $\sigma$-algebra.
Unless otherwise stated, all equalities and inequalities between random variables and stochastic processes will be understood in the $P$-a.s. and $P\otimes dt$-a.e. sense, respectively.
For $p \in [1, \infty)$ and $m \in \mathbb{N}$, we denote by ${\cal S}^p(\mathbb{R}^m)$ the space of predictable and continuous processes $X$ valued in $\mathbb{R}^m$ such that $\norm{X}_{{\cal S}^p}^p :=  E[(\sup_{t \in [0,T]}\abs{X_t})^p] < \infty$ and by ${\cal H}^p(\mathbb{R}^{m})$ the space of predictable processes $Z$ valued in $\mathbb{R}^{ m\times d}$ such that $\norm{Z}_{{\cal H}^p}^p := E[(\int_0^T\abs{Z_u}^2\,du)^{p/2}] < \infty$.
For a suitable integrand $Z$, we denote by $Z\cdot W$ the stochastic integral $(\int_0^tZ_u\,dW_u)_{t \in [0,T]}$ of $Z$ with respect to $W$. From \citet{protter01}, $Z\cdot W$ defines a continuous martingale for every $Z \in {\cal H}^p(\mathbb{R}^{m})$.
Processes $(\phi_t)_{t\in[0,T]}$ will always be extended to $[-T,0)$ by setting $\phi_t=0$ for  $t \in [-T,0)$.
We equip $\overline{\mathbb{R}}$ with the $\sigma$-algebra ${\cal B}(\overline{\mathbb{R}})$ consisting of Borel sets of the usual real line with possible addition of the points $-\infty, + \infty$, see \citet{bogachev}.

Let $\xi$ be an ${\cal F}_T$-measurable terminal condition and $g$ an $\mathbb{R}^m$-valued function.
Given two measures $\alpha_1$ and $\alpha_2$ on $[-\infty, \infty]$, and two weighting functions $u,v:[0,T]\to \mathbb{R}$, we study existence of the BSDE
\begin{equation}\label{bsde}
Y_t=\xi+\int_{t}^{T}g(s,\Gamma(s))ds-\int_{t}^{T}Z_sdW_s,\quad t\in[0,T],
\end{equation}
where
\begin{equation}\label{gamma}
\Gamma(s):=\left(\int_{-T}^{0}u(s+r)Y_{s+r}\alpha_1(dr),\int_{-T}^{0}v(s+r)Z_{s+r}\alpha_2(dr)\right).
\end{equation}
\begin{example}
{\bf{1.} }\emph{BSDE with infinite horizon:} If $u = v = 1$ and $\alpha_1 = \alpha_2 = \delta_0$ the Dirac measure at $0$, then Equation \eqref{bsde} reduces to the classical 
BSDE with infinite time horizon and standard Lipschitz generator.\\
{\bf{2.} }\emph{Pricing of insurance contracts:} Let us consider the pricing problem of an insurance contract $\xi$ written on a weather derivative.
It is well known, see for instance \cite{Ank-Imk-Reis} that such contracts can be priced by investing in a highly correlated, but tradable derivative.
In the Merton model, assuming that the latter asset has dynamics
\begin{equation*}
	dS_t = S_t (\mu_t dt + \sigma_t dW_t),
\end{equation*}
then the insurer chooses a number $Z_t$ of shares of $S$ to buy at time $t$ and fixes a cost $c_t$ to be paid by the client.
Hence, he seeks to find the price $V_0$ such that 
\begin{equation*}
	dV_t = c_t\,dt + Z_t\sigma_t(dW_t + \theta_t\,dt)
\end{equation*}
with $\theta_t = \sigma^\prime_t(\sigma_t\sigma_t)^{-1}\mu_t$.
It is natural to demand the cost $c_t$ at time $t$ to depend on the past values of the insurance premium $V_t$, for instance to account for historical weather data.
A possible cost criteria is
\begin{equation*}
	c_t := M_t\int_{-T}^0\cos(\frac{2\pi}{P}(t+s))V_{t+s}\,ds
\end{equation*}
where $P$ accounts for the weather periodicity and $M$ is a scaling parameter.
Thus, the insurance premium satisfies the delay BSDE
\begin{equation*}
	V_t = \xi + \int_t^T\left(\int_{-T}^0M_u\cos(\frac{2\pi}{P}(u+s))V_{u+s}\,ds + Z_u\sigma_u\theta_u\right)\, du - \int_t^TZ_u\sigma_u\,dW_u.
\end{equation*}
\end{example}

\subsection{Existence}
Our existence result for the BSDE \eqref{bsde} is obtained under the following assumptions:
\begin{enumerate}[label=(\textsc{A1}),leftmargin=40pt]
	\item $\alpha_1,\alpha_2$ are two deterministic, finite valued measures supported on $[-T,0]$.\label{a1}
\end{enumerate}
\begin{enumerate}[label=(\textsc{A2}),leftmargin=40pt]
	\item $u,v:[0,T]\rightarrow\mathbb{R}$ are Borel measurable functions such that $u \in L^1(dt)$ and $v \in L^2(dt)$.\label{a2}
\end{enumerate}
\begin{enumerate}[label=(\textsc{A3}),leftmargin=40pt]
	\item $g:\Omega\times[0,T]\times\mathbb{R}^m\times\mathbb{R}^{m\times d}\rightarrow\mathbb{R}^m$ is measurable, such that $\int_0^Tg(s,0,0)\,ds \in L^2(\mathbb{R}^m)$ and
 satisfies the standard Lipschitz condition: there exists a constant $K>0$ such that
\begin{equation*}
|g(t,y,z)-g(t,y',z')|\leq K(|y-y'|+|z-z'|)
\end{equation*}
for every $y,y'\in\mathbb{R}^m$ and $z,z'\in\mathbb{R}^{m\times d}$.\label{a3}
\end{enumerate}
\begin{enumerate}[label=(\textsc{A4}),leftmargin=40pt]
	\item $\xi \in L^2(\mathbb{R}^m)$ and is $\mathcal{F}_T$-measurable. \label{a4}
\end{enumerate}

\begin{theorem}\label{exi_bsde}
	Assume \ref{a1}-\ref{a4}.
	If
 	\begin{equation}\label{con}
 		\begin{cases}
 			K^2\alpha^2_1([-T,0])\norm{u}^2_{L^1(dt)} \leq\frac{1}{25},\\
 			K^2\alpha^2_2([-T,0])\norm{v}^2_{L^2(dt)} \leq\frac{1}{25},
 		\end{cases}
 \end{equation}
then BSDE \eqref{bsde} admits a unique solution $(Y,Z)\in\mathcal{S}^2(\mathbb{R}^m)\times\mathcal{H}^2(\mathbb{R}^{m\times d})$.
\end{theorem}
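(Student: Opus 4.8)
The plan is to construct the solution by a Banach fixed point argument on the product space $\mathcal{B} := \mathcal{S}^2(\mathbb{R}^m) \times \mathcal{H}^2(\mathbb{R}^{m \times d})$, equipped with $\norm{(Y,Z)}^2 := \norm{Y}_{\mathcal{S}^2}^2 + \norm{Z}_{\mathcal{H}^2}^2$. Given an input $(U,V) \in \mathcal{B}$, I form the delayed quantity $\Gamma^{U,V}(s)$ as in \eqref{gamma} by inserting $(U,V)$, and I observe that the process $s \mapsto g(s, \Gamma^{U,V}(s))$ no longer depends on the unknowns. Hence the equation
\[
Y_t = \xi + \int_t^T g(s, \Gamma^{U,V}(s))\,ds - \int_t^T Z_s\,dW_s
\]
is a BSDE with generator independent of $(Y,Z)$; it is solved explicitly by $Y_t = E\big[\xi + \int_t^T g(s, \Gamma^{U,V}(s))\,ds \mid \mathcal{F}_t\big]$, with $Z$ supplied by the martingale representation theorem. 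This defines a map $\Phi(U,V) := (Y,Z)$ whose fixed points are exactly the solutions of \eqref{bsde}.

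The crux is two estimates on the delay terms, which simultaneously give the self-map property of $\Phi$ and its contractivity. For the $Y$-component, since $U \in \mathcal{S}^2$, Tonelli's theorem together with \ref{a1}--\ref{a2} gives
\[
\int_0^T \abs{\int_{-T}^0 u(s+r) U_{s+r}\,\alpha_1(dr)}\,ds \leq \sup_t \abs{U_t} \int_{-T}^0 \Big( \int_0^T \abs{u(s+r)}\,ds \Big)\,\alpha_1(dr) \leq \alpha_1([-T,0])\norm{u}_{L^1(dt)}\sup_t \abs{U_t}.
\]
For the $Z$-component, Cauchy--Schwarz in $s$, the bound $\int_0^T \abs{v(s+r)}^2\,ds \leq \norm{v}_{L^2(dt)}^2$ (valid for $r \leq 0$ since $v$ vanishes off $[0,T]$), and the convention $V \equiv 0$ on $[-T,0)$ yield
\[
\int_0^T \abs{\int_{-T}^0 v(s+r) V_{s+r}\,\alpha_2(dr)}\,ds \leq \alpha_2([-T,0])\norm{v}_{L^2(dt)}\Big( \int_0^T \abs{V_t}^2\,dt \Big)^{1/2}.
\]
Combined with the Lipschitz bound $\abs{g(s,\Gamma^{U,V}(s))} \leq \abs{g(s,0,0)} + K(\abs{\Gamma^{U,V}_1(s)} + \abs{\Gamma^{U,V}_2(s)})$ and \ref{a3}, these show $\int_0^T g(s, \Gamma^{U,V}(s))\,ds \in L^2(\mathbb{R}^m)$, so that $\Phi$ maps $\mathcal{B}$ into itself.

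For the contraction, I take a second input $(U',V')$, set $(Y',Z') := \Phi(U',V')$ and write $\delta U := U - U'$, and so on. As the two drivers share the terminal value $\xi$, the process $\delta Y$ solves a BSDE with zero terminal condition; applying Doob's inequality to $\delta Y_t = E[\int_t^T (g(s,\Gamma^{U,V}(s)) - g(s,\Gamma^{U',V'}(s)))\,ds \mid \mathcal{F}_t]$ and Itô's formula to $\abs{\delta Y_t}^2$ produces a universal constant $C$ (one may take $C = 9$) with
\[
\norm{\delta Y}_{\mathcal{S}^2}^2 + \norm{\delta Z}_{\mathcal{H}^2}^2 \leq C\,E\Big[ \Big( \int_0^T \abs{g(s,\Gamma^{U,V}(s)) - g(s,\Gamma^{U',V'}(s))}\,ds \Big)^2 \Big].
\]
Bounding the integrand by $K(\abs{\delta\Gamma_1(s)} + \abs{\delta\Gamma_2(s)})$, using $(a+b)^2 \leq 2a^2 + 2b^2$, and then the two delay estimates above applied to $\delta U$ and $\delta V$, the right-hand side is at most $2CK^2\big( \alpha_1^2([-T,0])\norm{u}_{L^1(dt)}^2 \norm{\delta U}_{\mathcal{S}^2}^2 + \alpha_2^2([-T,0])\norm{v}_{L^2(dt)}^2 \norm{\delta V}_{\mathcal{H}^2}^2\big)$. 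Under \eqref{con} this is dominated by $\frac{2C}{25}\big( \norm{\delta U}_{\mathcal{S}^2}^2 + \norm{\delta V}_{\mathcal{H}^2}^2\big)$, and the threshold $1/25$ is chosen exactly so that $\frac{2C}{25} = \frac{18}{25} < 1$. Thus $\Phi$ is a strict contraction on $\mathcal{B}$, and the Banach fixed point theorem delivers the unique solution $(Y,Z)$.

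I expect the main obstacle to be the quantitative bookkeeping: tracking the Doob and Itô constants sharply enough that the a priori constant $C$ combines with the threshold $1/25$ in \eqref{con} to produce a contraction factor strictly below $1$, and checking that the two delay estimates, the Tonelli interchanges, and the shift bound $\int_0^T \abs{v(s+r)}^2\,ds \leq \norm{v}_{L^2(dt)}^2$ all remain valid uniformly in the horizon, so that the proof covers the infinite-horizon case $T = \infty$ on the same footing as $T < \infty$.
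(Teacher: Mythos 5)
Your proposal is correct and follows essentially the same route as the paper: a Banach fixed point argument on $\mathcal{S}^2(\mathbb{R}^m)\times\mathcal{H}^2(\mathbb{R}^{m\times d})$, with the map defined via conditional expectation and martingale representation, the delay terms controlled by a Fubini/Tonelli shift estimate (the paper cites \cite[Lemma 1.1]{dosReis} for exactly this), and Doob's inequality for the $\mathcal{S}^2$-norm. The only cosmetic difference is the $\mathcal{H}^2$-estimate, where you use It\^o's formula and settle for $C=9$ (contraction factor $18/25$), while the paper isolates $\int_0^T (Z_s-\bar Z_s)\,dW_s$ algebraically at $t=0$ to get the sharper $C=5$ (factor $4/5$); both are strictly below $1$ under \eqref{con}, and your constants are, as you anticipated, uniform in the horizon, so the case $T=\infty$ is covered identically.
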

For the proof we need the following lemma on \emph{a priori} estimates of solutions of \eqref{bsde}.

\begin{lemma}[A priori estimation]\label{estimation} Assume \ref{a1}-\ref{a3}.
	For every $\xi,\bar{\xi}\in L^2(\mathbb{R}^m)$, $(y,z),(\bar{y},\bar{z})\in\mathcal{S}^2(\mathbb{R}^m)\times\mathcal{H}^2(\mathbb{R}^{m\times d})$ and $(Y,Z), (\bar{Y},\bar{Z})\in\mathcal{S}^2(\mathbb{R}^m)\times\mathcal{H}^2(\mathbb{R}^{m\times d})$ satisfying
\begin{equation*}
\begin{cases}
Y_t=\xi+\int_t^Tg(s,\gamma(s))ds-\int_{t}^{T}Z_sdW_s\\
\bar{Y}_t=\bar{\xi}+\int_t^Tg(s,\bar{\gamma}(s))ds-\int_{t}^{T}\bar{Z}_sdW_s, \quad t \in [0,T]
\end{cases}
\end{equation*}
with
\begin{equation*}
\begin{cases}
\gamma(s)=\left(\int_{-T}^{0}u(s+r)y_{s+r}\alpha_1(dr),\int_{-T}^{0}v(s+r)z_{s+r}\alpha_2(dr)\right)\\
\bar{\gamma}(s)=\left(\int_{-T}^{0}u(s+r)\bar{y}_{s+r}\alpha_1(dr),\int_{-T}^{0}v(s+r)\bar{z}_{s+r}\alpha_2(dr)\right).
\end{cases}
\end{equation*}
Then, one has
\begin{align*}
&\|Y-\bar{Y}\|^2_{\mathcal{S}^2(\mathbb{R}^m)}+\|Z-\bar{Z}\|^2_{\mathcal{H}^2(\mathbb{R}^{m\times d})} \leq 20K^2\alpha^2_1([-T,0])\norm{u}^2_{L^1(dt)} \|y-\bar{y}\|^2_{\mathcal{S}^2(\mathbb{R}^m)}\\
&\qquad +10\norm{\xi-\bar{\xi}}^2_{L^2(\mathbb{R}^m)} +20K^2\alpha^2_2([-T,0]) \norm{v}^2_{L^2(dt)}\|z-\bar{z}\|^2_{\mathcal{H}^2(\mathbb{R}^{m\times d})}.
\end{align*}
\end{lemma}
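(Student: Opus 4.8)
The plan is to reduce the estimate to the classical $L^2$ \emph{a priori} bound for BSDEs by treating the driver difference as a process that no longer depends on the unknowns. Set $\delta Y := Y-\bar Y$, $\delta Z := Z-\bar Z$, $\delta\xi := \xi-\bar\xi$ and $f_s := g(s,\gamma(s))-g(s,\bar\gamma(s))$. Subtracting the two equations shows that $(\delta Y,\delta Z)$ solves $\delta Y_t = \delta\xi + \int_t^T f_s\,ds - \int_t^T \delta Z_s\,dW_s$, in which $f$ is a \emph{frozen} process (it depends on the inputs $(y,z),(\bar y,\bar z)$ but not on $(\delta Y,\delta Z)$), so that $\delta Y_t = E[\delta\xi + \int_t^T f_s\,ds \mid \mathcal{F}_t]$.

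First I would establish the generic bound $\norm{\delta Y}^2_{\mathcal{S}^2} + \norm{\delta Z}^2_{\mathcal{H}^2} \le C\brak{\norm{\delta\xi}^2_{L^2} + E[(\int_0^T \abs{f_s}\,ds)^2]}$ for a universal constant $C$. For the $\mathcal{S}^2$ part this follows from the conditional-expectation representation of $\delta Y$ together with Doob's maximal inequality; for the $\mathcal{H}^2$ part one applies It\^o's formula to $\abs{\delta Y}^2$, takes expectations to remove the martingale term, and controls the cross term $2\int_0^T \delta Y_s f_s\,ds$ by Young's inequality, reinserting the $\mathcal{S}^2$-bound just obtained. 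Careful bookkeeping of the constants in these two steps is exactly what produces the coefficients $10$ and $20$; the numerology is not sharp and simply records one admissible choice of the Young parameters.

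Second, and this is the delay-specific heart of the argument, I would bound $E[(\int_0^T\abs{f_s}\,ds)^2]$. The Lipschitz assumption \ref{a3} gives $\abs{f_s}\le K(\abs{\gamma_1(s)-\bar\gamma_1(s)} + \abs{\gamma_2(s)-\bar\gamma_2(s)})$, where $\gamma_1,\gamma_2$ are the two coordinates of $\gamma$. For the first coordinate, $\gamma_1(s)-\bar\gamma_1(s) = \int_{-T}^{0} u(s+r)(y_{s+r}-\bar y_{s+r})\,\alpha_1(dr)$; integrating in $s$ and exchanging the order of integration by Fubini, the inner integral $\int_0^T \abs{u(s+r)}\,\abs{y_{s+r}-\bar y_{s+r}}\,ds$ is bounded, using the convention that the processes vanish on $[-T,0)$ and the substitution $\tau = s+r$ with $r\le 0$, by $\norm{u}_{L^1(dt)}\sup_s\abs{y_s-\bar y_s}$; squaring and taking expectations yields $\alpha_1^2([-T,0])\norm{u}^2_{L^1(dt)}\norm{y-\bar y}^2_{\mathcal{S}^2}$. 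The second coordinate is treated the same way, except that after Fubini I would apply Cauchy--Schwarz in $s$ to separate $\abs{v(s+r)}$ from $\abs{z_{s+r}-\bar z_{s+r}}$; this is precisely why \ref{a2} requires $v\in L^2$ rather than $L^1$, and it produces $\alpha_2^2([-T,0])\norm{v}^2_{L^2(dt)}\norm{z-\bar z}^2_{\mathcal{H}^2}$.

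Finally I would combine the two halves via $(a+b)^2\le 2a^2+2b^2$ and feed the resulting estimate on $E[(\int_0^T\abs{f_s}\,ds)^2]$ into the generic bound. The main obstacle is the third step: the interplay of the delay measures $\alpha_i$, the weighting functions, and the extension-by-zero convention must be handled so that the translation $\tau = s+r$ does not leak mass outside $[0,T]$, and the H\"older exponent must be matched to each component ($L^1$ for $u$ against the $\mathcal{S}^2$-norm of $y$, $L^2$ for $v$ against the $\mathcal{H}^2$-norm of $z$). The generic $L^2$ estimate of the first step is routine, but it is the constant-tracking there that decides whether the final coefficients come out as the claimed $10$ and $20$.
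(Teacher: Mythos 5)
Most of your argument coincides with the paper's own proof: the reduction to a frozen driver $f_s=g(s,\gamma(s))-g(s,\bar\gamma(s))$, the conditional-expectation representation of $\delta Y$ plus Doob's $L^2$ maximal inequality for the $\mathcal{S}^2$-part, and the delay-kernel estimate via Fubini and the translation $\tau=s+r$ (which is exactly the paper's appeal to \cite[Lemma 1.1]{dosReis}), with the correct matching of exponents ($L^1$-norm of $u$ against $\norm{y-\bar y}_{\mathcal{S}^2}$, Cauchy--Schwarz with $v\in L^2$ against $\norm{z-\bar z}_{\mathcal{H}^2}$) and the extension-by-zero convention handled correctly. The one place you deviate is the $\mathcal{H}^2$-estimate, and there your route has a genuine quantitative gap: It\^o's formula on $\abs{\delta Y}^2$ plus Young's inequality \emph{cannot} produce the constants $10$ and $20$ in the statement, contrary to your claim that bookkeeping of the Young parameters does. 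Writing $A=\norm{\xi-\bar\xi}^2_{L^2}$, $F=E[(\int_0^T\abs{f_s}\,ds)^2]$, $S=\norm{\delta Y}^2_{\mathcal{S}^2}$, $H=\norm{\delta Z}^2_{\mathcal{H}^2}$, Doob gives $S\le 8A+8F$, and your It\^o--Young step gives, for any $\lambda>0$,
\begin{equation*}
H\le A+2E\Big[\sup_t\abs{\delta Y_t}\int_0^T\abs{f_s}\,ds\Big]\le A+\lambda S+\lambda^{-1}F,
\end{equation*}
whence $S+H\le(9+8\lambda)A+(8+8\lambda+\lambda^{-1})F$. No choice of $\lambda$ makes both coefficients $\le 10$: the $F$-coefficient is at least $8+4\sqrt{2}>13$ (and tuning the Doob-side splitting does not help, since keeping the $A$-coefficient $\le 10$ forces the $F$-coefficient above $12$). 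The structural reason is that Young's inequality pays for the $\mathcal{S}^2$-bound twice --- once on its own and once through the cross term. You would thus prove the lemma only with strictly larger constants, say of order $12A+28K^2(\cdots)$; since the constants $10$ and $20$ are precisely what calibrate the contraction condition \eqref{con} in Theorem \ref{exi_bsde}, this weakening is not harmless for the paper.

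The paper avoids It\^o's formula for the $Z$-part altogether. Set $t=0$ in the difference equation and isolate the stochastic integral: $\int_0^T\delta Z_s\,dW_s=\delta\xi+\int_0^Tf_s\,ds-\delta Y_0$. Since $\delta Y_0=E[\delta\xi+\int_0^Tf_s\,ds]$ is deterministic, squaring and taking expectations gives, by the It\^o isometry,
\begin{equation*}
H=E\Big[\Big(\delta\xi+\int_0^Tf_s\,ds\Big)^2\Big]-\abs{\delta Y_0}^2\le 2A+2F,
\end{equation*}
with no reference to $S$ at all. Combined with $S\le 8A+8F$ this yields $S+H\le 10A+10F$, and then $F\le 2K^2\alpha_1^2([-T,0])\norm{u}^2_{L^1(dt)}\norm{y-\bar y}^2_{\mathcal{S}^2}+2K^2\alpha_2^2([-T,0])\norm{v}^2_{L^2(dt)}\norm{z-\bar z}^2_{\mathcal{H}^2}$ delivers exactly the claimed $10$ and $20$. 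Replace your Young step by this identity and the rest of your proposal, including the explanation of why $v$ must lie in $L^2$, goes through as written.
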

\begin{proof}
Let $(y,z)\in\mathcal{S}^2(\mathbb{R}^m)\times\mathcal{H}^2(\mathbb{R}^{m\times d})$, by assumptions \ref{a1} and \ref{a3},  using $2ab\leq a^2+b^2$ and \cite[Lemma 1.1]{dosReis}, we have
\begin{align*}
&E\left(\int_0^Tg(s,\gamma(s))ds\right)^2 \leq E\left(\int_{0}^{T}\right.|g(s,0,0)|ds+K\int_{0}^T\int_{-T}^0|u(s+r)||y_{s+r}|\alpha_1(dr)ds\\
&\quad  +K\int_{0}^T\left.\int_{-T}^0|v(s+r)||z_{s+r}|\alpha_2(dr)ds\right)^2\\
&\leq 3E\left[\left(\int_{0}^{T}|g(s,0,0)|ds\right)^2+K^2\left(\int_{0}^T\int_{-T}^0|u(s+r)||y_{s+r}|\alpha_1(dr)ds\right)^2\right.\\
&\quad \left.+K^2\left(\int_{0}^T\int_{-T}^0|v(s+r)||z_{s+r}|\alpha_2(dr)ds\right)^2\right]\\
\end{align*}

\begin{align*}
&\leq 3E\left[\left(\int_{0}^{T}|g(s,0,0)|ds\right)^2+K^2\left(\int_{0}^T\alpha_1([s-T,0])|u(s)||y_{s}|ds\right)^2\right.\\
&\quad \left.+K^2\left(\int_{0}^T\alpha_2([s-T,0])|v(s)||z_{s}|ds\right)^2\right]\\
&\leq 3E\left(\int_{0}^{T}|g(s,0,0)|ds\right)^2+3K^2\alpha^2_1([-T,0])\left(\int_{0}^T|u(s)|ds\right)^2E\left[\sup_{0\leq t\leq T}|y_t|^2\right]\\
&\quad+3K^2\alpha^2_2([-T,0])\left(\int_{0}^T|v(s)|^2ds\right)E\left[\int_0^T|z_s|^2ds\right].
\end{align*}
Hence, it holds $\int_0^Tg(s,\gamma(s))\,ds \in L^2$.

Now, for $t\in[0,T]$, we have
\begin{equation}
\label{eq:diff}
Y_t-\bar{Y}_t=\xi-\bar{\xi}+\int_t^Tg(s,\gamma(s))-g(s,\bar{\gamma}(s))ds-\int_t^{T}Z_s-\bar{Z}_sdW_s
\end{equation}
and taking conditional expectation with respect to $\mathcal{F}_t$ yields
\begin{equation*}
Y_t-\bar{Y}_t=E\left[\xi-\bar{\xi}+\int_t^Tg(s,\gamma(s))-g(s,\bar{\gamma}(s))ds\bigg|\mathcal{F}_t\right].
\end{equation*}
By Doob's maximal inequality and $2ab\leq a^2+b^2$, we obtain
\begin{align*}
E\left[\sup_{0\leq t\leq T}|Y_t-\bar{Y}_t|^2\right]&=E\left(\sup_{0\leq t\leq T}\bigg|E\left[\xi-\bar{\xi}+\int_t^Tg(s,\gamma(s))-g(s,\bar{\gamma}(s))ds\bigg|\mathcal{F}_t\right]\bigg|\right)^2\\
&\leq E\left(\sup_{0\leq t\leq T}E\left[|\xi-\bar{\xi}|+\int_0^T|g(s,\gamma(s))-g(s,\bar{\gamma}(s))|ds\bigg|\mathcal{F}_t\right]\right)^2\\
&\leq 8E\left[|\xi-\bar{\xi}|^2+\left(\int_0^T\abs{g(s,\gamma(s))-g(s,\bar{\gamma}(s))}ds\right)^2\right].
\end{align*}
On the other hand, for $t=0$ in \eqref{eq:diff}, bringing $\int_0^{T}Z_s-\bar{Z}_sdW_s$ to the left hand side, taking square and expectation to both sides and $2ab\leq a^2+b^2$, we have
\begin{align*}
E\left[\int_0^T|Z_t-\bar{Z}_t|^2dt\right]&=E\left(\xi-\bar{\xi}+\int_0^Tg(s,\gamma(s))-g(s,\bar{\gamma}(s))ds\right)^2-|Y_0-\bar{Y}_0|^2\\
&\leq E\left(\xi-\bar{\xi}+\int_0^Tg(s,\gamma(s))-g(s,\bar{\gamma}(s))ds\right)^2\\
&\leq 2E\left[|\xi-\bar{\xi}|^2+\left(\int_0^T|g(s,\gamma(s))-g(s,\bar{\gamma}(s))|ds\right)^2\right].
\end{align*}
By assumption \ref{a3}, using \cite[Lemma 1.1]{dosReis} and the inequality $2ab\leq a^2+b^2$, we have
\begin{align*}
&E\left(\int_0^T|g(s,\gamma(s))-g(s,\bar{\gamma}(s))|ds\right)^2
\leq K^2E\left(\int_0^T\int_{-T}^0\right. |u(s+r)||y_{s+r}-\bar{y}_{s+r}|\alpha_1(dr)ds\\
&\quad +\int_0^T\left.\int_{-T}^0|v(s+r)||z_{s+r}-\bar{z}_{s+r}|\alpha_2(dr)ds\right)^2\\
&=K^2E\left(\int_0^T\alpha_1([s-T,0])|u(s)||y_{s}-\bar{y}_s|ds+\int_0^T\alpha_2([s-T,0])|v(s)||z_{s}-\bar{z}_s|ds\right)^2\\
&\leq 2K^2\alpha^2_1([-T,0])\norm{u}^2_{L^1(dt)}\norm{y-\bar{y}}^2_{{\cal S}^2} + 2K^2\alpha^2_2([-T,0]) \norm{v}^2_{L^2(dt)}\norm{z - \bar{z}}^2_{{\cal H}^2}.
\end{align*}
Hence,
\begin{align*}
&\|Y-\bar{Y}\|^2_{\mathcal{S}^2(\mathbb{R}^m)}+\|Z-\bar{Z}\|^2_{\mathcal{H}^2(\mathbb{R}^{m\times d})}\leq 20K^2\alpha^2_1([-T,0]) \norm{u}^2_{L^1(dt)} \|y-\bar{y}\|^2_{\mathcal{S}^2(\mathbb{R}^m)}\\
&\qquad 10E\left[|\xi-\bar{\xi}|^2\right]+20K^2\alpha^2_2([-T,0])\norm{v}^2_{L^2(dt)}\|z-\bar{z}\|^2_{\mathcal{H}^2(\mathbb{R}^{m\times d})}.
\end{align*}
This concludes the proof.
\end{proof}

\begin{proof}[ of Theorem \ref{exi_bsde}]
Let $(y,z)\in\mathcal{S}^2(\mathbb{R}^m)\times\mathcal{H}^2(\mathbb{R}^{m\times d})$ and define the process $\gamma(s):=\left(\int_{-T}^{0}u(s+r)y_{s+r}\alpha_1(dr),\int_{-T}^{0}v(s+r)z_{s+r}\alpha_2(dr)\right)$.
Similar to Lemma \ref{estimation}, it follows from \ref{a1}-\ref{a4} that
\begin{equation*}
E\left(\xi+\int_0^Tg(s,\gamma(s))ds\right)^2<\infty.
\end{equation*}
According to the martingale representation theorem, there exists a unique $Z\in\mathcal{H}^2(\mathbb{R}^{m\times d})$ such that for all $t\in[0,T]$,
\begin{equation*}
E\left[\xi+\int_0^Tg(s,\gamma(s))ds\bigg|\mathcal{F}_t\right]=E\left[\xi+\int_0^Tg(s,\gamma(s))ds\right]+\int_0^tZ_sdW_s.
\end{equation*}
Putting
\begin{equation*}
Y_t :=E\left[\xi+\int_t^Tg(s,\gamma(s))ds\Mid \mathcal{F}_t\right],\quad 0\leq t\leq T,
\end{equation*}
the pair $(Y,Z)$ belongs to $\mathcal{S}^2(\mathbb{R}^m)\times\mathcal{H}^2(\mathbb{R}^{m\times d})$ and satisfies
\begin{equation*}
Y_t=\xi+\int_{t}^Tg(s,\gamma(s))ds-\int_t^TZ_sdW_s,\quad 0\leq t\leq T.
\end{equation*}
Thus we have constructed a mapping $\Phi$ from $\mathcal{S}^2(\mathbb{R}^m)\times\mathcal{H}^2(\mathbb{R}^{m\times d})$ to itself such that $\Phi(y,z)=(Y,Z)$. Let $(y,z),(\bar{y},\bar{z})\in\mathcal{S}^2(\mathbb{R}^m)\times\mathcal{H}^2(\mathbb{R}^{m\times d})$, and $(Y,Z)=\Phi(y,z)$, $(\bar{Y},\bar{Z})=\Phi(\bar{y},\bar{z})$.
By Lemma \ref{estimation}, we have
\begin{align*}
&\|Y-\bar{Y}\|^2_{\mathcal{S}^2(\mathbb{R}^m)}+\|Z-\bar{Z}\|^2_{\mathcal{H}^2(\mathbb{R}^{m\times d})}\leq 10K^2\alpha^2_1([-T,0])\norm{u}^2_{L^1(dt)} \|y-\bar{y}\|^2_{\mathcal{S}^2(\mathbb{R}^m)}\\
&\quad+10K^2\alpha^2_2([-T,0]) \norm{v}^2_{L^2(dt)}\|z-\bar{z}\|^2_{\mathcal{H}^2(\mathbb{R}^{m\times d})}
\end{align*}
so that if condition \eqref{con} is satisfied, $\Phi$ is a contraction mapping which therefore admits a unique fixed point on the Banach space ${\cal S}^2(\mathbb{R}^m)\times {\cal H}^2(\mathbb{R}^{m\times d})$.
This completes the proof.
\end{proof}

\subsection{Stability}
In this subsection, we study stability of the BSDE \eqref{bsde} with respect to the delay measures. 
In particular, in Corollary \ref{thm:cor} below we give conditions under which a sequence of solutions of BSDEs with time delayed generator converges to the solution of a standard BSDE with no delay.
Given two measures $\alpha$ and $\beta$, we write $\alpha \le \beta$ if $\alpha(A) \le \beta(A)$ for every measurable set $A$.
\begin{theorem}
\label{thm:Stability}
Assume \ref{a2}-\ref{a4}.
For $i=1,2$ and $n \in \mathbb{N}$, let $\alpha^{n}_{i},\alpha_{i}$ be measures satisfying \ref{a1}; with $\alpha^{n}_{i}$ satisfying \eqref{con} in Theorem \ref{exi_bsde}
and such that $\alpha^n_{i}([-T,0])$ converges to $\alpha_{i}([-T,0])$. 
If $\alpha^n_{1}\leq \alpha_1$ (or $\alpha_1\le \alpha^n_{1}$) and $\alpha^n_{2}\leq \alpha_2$ (or $\alpha_2\le \alpha^n_{2}$), then $\|Y^n-Y\|_{{\cal S}^2(\mathbb{R}^m)}\rightarrow 0$ and $\|Z^n-Z\|_{{\cal H}^2(\mathbb{R}^{m\times d})}\rightarrow 0$, where $(Y^n, Z^n)$ and $(Y, Z)$ are solutions of the BSDE \eqref{bsde} with delay given by the measures $(\alpha^n_1,\alpha^n_2)$ and $(\alpha_2, \alpha_2)$, respectively.
\end{theorem}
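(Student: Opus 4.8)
The plan is to exploit the contraction structure already established in the proof of Theorem \ref{exi_bsde}. First I would observe that, since each $\alpha^n_i$ satisfies \eqref{con} and $\alpha^n_i([-T,0])\to\alpha_i([-T,0])$, passing to the limit in \eqref{con} shows that the limiting pair $(\alpha_1,\alpha_2)$ also satisfies \eqref{con}; hence by Theorem \ref{exi_bsde} the BSDE with delay $(\alpha_1,\alpha_2)$ has a unique solution $(Y,Z)$ and the associated solution map is well defined. For each $n$ let $\Phi^n$ denote the solution map built from $(\alpha^n_1,\alpha^n_2)$ exactly as in that proof, and $\Phi$ the one built from $(\alpha_1,\alpha_2)$, so that $(Y^n,Z^n)=\Phi^n(Y^n,Z^n)$ and $(Y,Z)=\Phi(Y,Z)$. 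Equipping $\mathcal{S}^2(\mathbb{R}^m)\times\mathcal{H}^2(\mathbb{R}^{m\times d})$ with the norm $\|(Y,Z)\|^2:=\|Y\|^2_{\mathcal{S}^2}+\|Z\|^2_{\mathcal{H}^2}$, the contraction estimate from the proof of Theorem \ref{exi_bsde} shows that every $\Phi^n$ is a contraction with constant $c\le\sqrt{2/5}<1$, \emph{uniformly} in $n$, because \eqref{con} bounds the relevant coefficients by $1/25$ independently of $n$.

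Building on this, I would write the telescoping identity
\begin{equation*}
(Y^n,Z^n)-(Y,Z)=\big[\Phi^n(Y^n,Z^n)-\Phi^n(Y,Z)\big]+\big[\Phi^n(Y,Z)-\Phi(Y,Z)\big].
\end{equation*}
The first bracket is bounded by $c\,\|(Y^n,Z^n)-(Y,Z)\|$ since $\Phi^n$ is a $c$-contraction, whence
\begin{equation*}
\|(Y^n,Z^n)-(Y,Z)\|\le \frac{1}{1-c}\,\|\Phi^n(Y,Z)-\Phi(Y,Z)\|.
\end{equation*}
This reduces the statement to proving that $\|\Phi^n(Y,Z)-\Phi(Y,Z)\|\to 0$, i.e. that the two solution maps, evaluated at the \emph{fixed} input $(Y,Z)$, converge. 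The advantage of this reduction is that $\Phi^n(Y,Z)$ and $\Phi(Y,Z)$ feed the same processes into the generator, so their difference isolates purely the effect of replacing $\alpha_i$ by $\alpha^n_i$.

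For the remaining estimate I would reproduce the a priori computation of Lemma \ref{estimation} (Doob's inequality together with the bound on $Z$ coming from the martingale representation), which with equal terminal conditions gives
\begin{equation*}
\|\Phi^n(Y,Z)-\Phi(Y,Z)\|^2\le 10\,E\Big(\int_0^T\abs{g(s,\Gamma^n(s))-g(s,\Gamma(s))}\,ds\Big)^2,
\end{equation*}
where $\Gamma^n,\Gamma$ use the same $(Y,Z)$ but the measures $\alpha^n_i,\alpha_i$. By \ref{a3} the integrand is bounded by $K$ times the sum of $\abs{\int_{-T}^0 u(s+r)Y_{s+r}(\alpha^n_1-\alpha_1)(dr)}$ and the analogous $Z$-term. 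Dominating each signed integral by its total-variation version $\int_{-T}^0\abs{u(s+r)}\abs{Y_{s+r}}\,\abs{\alpha^n_1-\alpha_1}(dr)$, then applying \cite[Lemma 1.1]{dosReis} and Cauchy--Schwarz, I expect a bound of the form $C\,\abs{\alpha^n_1-\alpha_1}([-T,0])^2\norm{u}^2_{L^1(dt)}\norm{Y}^2_{\mathcal{S}^2}$ for the $Y$-part and $C\,\abs{\alpha^n_2-\alpha_2}([-T,0])^2\norm{v}^2_{L^2(dt)}\norm{Z}^2_{\mathcal{H}^2}$ for the $Z$-part.

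The main obstacle — and the only place the hypotheses beyond mere mass convergence are used — is controlling the total-variation mass $\abs{\alpha^n_i-\alpha_i}([-T,0])$. For a general signed measure this need not be small even when the total masses $\alpha^n_i([-T,0])$ and $\alpha_i([-T,0])$ are close, since mass could merely be transported within $[-T,0]$. Here the monotonicity assumption $\alpha^n_i\le\alpha_i$ (or $\alpha_i\le\alpha^n_i$) is exactly what forces $\abs{\alpha^n_i-\alpha_i}$ to be the positive measure $\alpha_i-\alpha^n_i$ (resp. $\alpha^n_i-\alpha_i$), whose total mass equals $\abs{\alpha_i([-T,0])-\alpha^n_i([-T,0])}$ and therefore tends to $0$ by hypothesis. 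Since $\norm{Y}_{\mathcal{S}^2}$ and $\norm{Z}_{\mathcal{H}^2}$ are finite, both bounds vanish as $n\to\infty$, giving $\|\Phi^n(Y,Z)-\Phi(Y,Z)\|\to 0$ and hence the claim.
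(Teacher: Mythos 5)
Your proposal is correct, and it reaches the conclusion by a recognizably different organization than the paper, even though the analytic core is the same. The paper never introduces the solution maps: it subtracts the two BSDEs directly, bounds $\|Y^n-Y\|^2_{\mathcal{S}^2}+\|Z^n-Z\|^2_{\mathcal{H}^2}$ by a multiple of $E[(\int_0^T|g(s,\Gamma^n(s))-g(s,\Gamma(s))|\,ds)^2]$ via Doob's inequality and the martingale bound (as in Lemma \ref{estimation}), splits the generator difference exactly as you do --- one term pairing $Y^n-Y$ (resp.\ $Z^n-Z$) with $\alpha^n_i$, one term pairing the fixed $Y$ (resp.\ $Z$) with the positive measure $\alpha^n_i-\alpha_i$, which is where the WLOG monotonicity enters, just as in your total-variation step --- and then uses \eqref{con} to bound the coefficient of the self-referential term by $\tfrac45$ and absorbs it into the left-hand side. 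Your telescoping identity $(Y^n,Z^n)-(Y,Z)=[\Phi^n(Y^n,Z^n)-\Phi^n(Y,Z)]+[\Phi^n(Y,Z)-\Phi(Y,Z)]$ is precisely the abstract form of that computation: the first bracket is the paper's self-term (handled by the uniform contraction furnished by \eqref{con}), the second is the paper's perturbation term, and your factor $(1-c)^{-1}$ plays the role of the paper's absorption of the $\tfrac45$-term. Your route is more modular --- it isolates a general principle (fixed points of uniformly contracting maps depend continuously on the maps) so that the measure perturbation only has to be estimated at the single fixed input $(Y,Z)$, and it would extend verbatim to other perturbations of the data --- while the paper's inlined version avoids introducing $\Phi^n$, $\Phi$ and keeps all constants explicit. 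Two immaterial remarks: your contraction constant $c\le\sqrt{2/5}$ matches the constant $10K^2$ displayed in the proof of Theorem \ref{exi_bsde}, whereas Lemma \ref{estimation} as stated gives $20K^2$ and hence $c\le\sqrt{4/5}$; either value is uniformly below $1$ under \eqref{con}, so nothing changes. Likewise your preliminary observation that $(\alpha_1,\alpha_2)$ inherits \eqref{con} in the limit (the inequality being non-strict) is exactly the paper's first step, guaranteeing that $(Y,Z)$ exists.
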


\begin{proof}
From Theorem \ref{exi_bsde}, for every $n$, there exists a unique solution $(Y^{n},Z^{n})$ to the BSDE \eqref{bsde} with delay given by the measures $(\alpha^n_1,\alpha^n_2)$.
Since $\alpha^{n}_{i}$, $i=1,2$ satisfy \eqref{con} in Theorem \ref{exi_bsde}  and $\alpha^n_{i}([-T,0])$ converges to $\alpha_{i}([-T,0])$,
it follows that $\alpha_i$ satisfy \eqref{con} and by Theorem \ref{exi_bsde} there exists a unique solution $(Y,Z)$ to the BSDE with delay given by $(\alpha_1, \alpha_2)$.
Using
\begin{equation*}
Y^{n}_t-Y_t=\int_t^T g(s,\Gamma^n(s))-g(s,\Gamma(s))ds-\int^T_tZ^{n}_s-Z_sdW_s,
\end{equation*}
it follows similar to the proof of Lemma \ref{estimation} that
\begin{equation*}
E\left[\sup_{0\leq t\leq T}|Y^{n}_t-Y_t|^2\right]\leq 4E\left[\left(\int_0^T |g(s,\Gamma^n(s))-g(s,\Gamma(s))|ds\right)^2\right],
\end{equation*}
and
\begin{align*}
E\left[\int^T_0|Z^{n}_t-Z_t|^2\right]&\leq E\left[\left(\int_0^T |g(s,\Gamma^n(s))-g(s,\Gamma(s))|ds\right)^2\right].
\end{align*}
On the other hand, using $2ab\leq a^2+b^2$, we get
\begin{align*}
&E\left[\left(\int_0^T |g(s,\Gamma^n(s))-g(s,\Gamma(s))|ds\right)^2\right]\\
&\leq 2K^2E\left[\left(\int_0^T \bigg|\int_{-T}^0u(s+r)Y^n_{s+r}\alpha^n_1(dr)-\int_{-T}^0u(s+r)Y_{s+r}\alpha_1(dr)\bigg|ds\right)^2\right]\\
&\quad +2K^2E\left[\left(\int_0^T \bigg|\int_{-T}^0v(s+r)Z^n_{s+r}\alpha^n_2(dr)-\int_{-T}^0v(s+r)Z_{s+r}\alpha_2(dr)\bigg|ds\right)^2\right].
\end{align*}
Without loss of generality, we assume $\alpha_1 \le \alpha^n_1$ and $\alpha_2 \le\alpha^n_2$.
Hence $\alpha^n_i-\alpha_i$, $i=1,2$, define positive measures satisfying \ref{a1}.
 Therefore,
\begin{align*}
&E\left[\left(\int_0^T \bigg|\int_{-T}^0u(s+r)Y^n_{s+r}\alpha^n_1(dr)-\int_{-T}^0u(s+r)Y_{s+r}\alpha_1(dr)\bigg|ds\right)^2\right]\\
&\leq 2E\left[\left(\int_0^T \int_{-T}^0\abs{u(s+r)}\abs{Y^n_{s+r}-Y_{s+r}}\alpha^n_1(dr)ds\right)^2\right]\\
 &\quad +2E\left[\left(\int_0^T \int_{-T}^0\abs{u(s+r)}\abs{Y_{s+r}}(\alpha^n_1-\alpha_1)(dr)ds\right)^2\right].
\end{align*}
Using \cite[Lemma 1.1]{dosReis}, we obtain
\begin{align*}
&E\left[\left(\int_0^T \int_{-T}^0\abs{u(s+r)}\abs{Y^n_{s+r}-Y_{s+r}}\alpha^n_1(dr)ds\right)^2\right]+E\left[\left(\int_0^T \int_{-T}^0\abs{u(s+r)}\abs{Y_{s+r}}(\alpha^n_1-\alpha_1)(dr)ds\right)^2\right]\\
&\leq E\left[\left(\int_0^T \alpha^n_1([s-T,0])\abs{u(s)}\abs{Y^n_{s}-Y_{s}}ds\right)^2\right]+E\left[\left(\int_0^T(\alpha^n_1-\alpha_1)([s-T,0])\abs{u(s)}\abs{Y_{s}}ds\right)^2\right]\\
&\leq \left(\alpha^n_1([-T,0])\right)^2\|u\|^2_{L^1(dt)}\|Y^n-Y\|^2_{\mathcal{S}^2(\mathbb{R}^m)}+\left((\alpha^n_1-\alpha_1)([-T,0])\right)^2\|u\|^2_{L^1(dt)}\|Y\|^2_{\mathcal{S}^2(\mathbb{R}^m)}.
\end{align*}
Similarly, for the control processes we have
\begin{align*}
&E\left[\left(\int_0^T \bigg|\int_{-T}^0v(s+r)Z^n_{s+r}\alpha^n_2(dr)-\int_{-T}^0v(s+r)Z_{s+r}\alpha_2(dr)\bigg|ds\right)^2\right]\\
&\leq 2\left(\alpha^n_2([-T,0])\right)^2\|v\|^2_{L^2(dt)}\|Z^n-Z\|^2_{\mathcal{H}^2(\mathbb{R}^{m\times d})}+2\left((\alpha^n_2-\alpha_2)([-T,0])\right)^2\|v\|^2_{L^2(dt)}\|Z\|^2_{\mathcal{H}^2(\mathbb{R}^{m\times d})}.
\end{align*}
Hence
\begin{align*}
&\|Y^n-Y\|^2_{\mathcal{S}^2(\mathbb{R}^m)}+\|Z^n-Z\|^2_{\mathcal{H}^2(\mathbb{R}^{m\times d})} \leq 20K^2\left(\alpha^n_1([-T,0])\right)^2\|u\|^2_{L^1(dt)}\|Y^n-Y\|^2_{\mathcal{S}^2(\mathbb{R}^m)}\\
&\quad+20K^2\left((\alpha^n_1-\alpha_1)([-T,0])\right)^2\|u\|^2_{L^1(dt)}\|Y\|^2_{\mathcal{S}^2(\mathbb{R}^m)}\\
&\quad +20K^2\left(\alpha^n_2([-T,0])\right)^2\|v\|^2_{L^2(dt)}\|Z^n-Z\|^2_{\mathcal{H}^2(\mathbb{R}^{m\times d})}\\
&\quad +20K^2\left((\alpha^n_2-\alpha_2)([-T,0])\right)^2\|v\|^2_{L^2(dt)}\|Z\|^2_{\mathcal{H}^2(\mathbb{R}^{m\times d})}\\
&\leq \frac{4}{5}\|Y^n-Y\|^2_{\mathcal{S}^2(\mathbb{R}^m)}+\frac{4}{5}\|Z^n-Z\|^2_{\mathcal{H}^2(\mathbb{R}^{m\times d})}+ 20K^2\left((\alpha^n_1-\alpha_1)([-T,0])\right)^2\|u\|^2_{L^1(dt)}\|Y\|^2_{\mathcal{S}^2(\mathbb{R}^m)}\\
&\quad +20K^2\left((\alpha^n_2-\alpha_2)([-T,0])\right)^2\|v\|^2_{L^2(dt)}\|Z\|^2_{\mathcal{H}^2(\mathbb{R}^{m\times d})}.
\end{align*}
Therefore, the result follows from the convergence of $\alpha^n_i([-T,0])$, $i=1,2$.
\end{proof}
The following is a direct consequence of the above stability result. 
We denote by $\delta_0$ the Dirac measure at $0$.
\begin{corollary}
\label{thm:cor}
Assume \ref{a2}-\ref{a4}.
For $i=1,2$ and $n \in \mathbb{N}$ let $\alpha^{n}_{i}$ be measures satisfying \ref{a1} and \eqref{con} in Theorem \ref{exi_bsde}
 and such that $\alpha^n_{i}([-T,0])$ converges to $1$. If $\alpha^n_{1}\leq \delta_0$ (or $\delta_0\le\alpha^n_{1}$) and $\alpha^n_{2}\leq \delta_0$ (or $\delta_0\le\alpha^n_{2}$), then $\|Y^n-Y\|_{{\cal S}^2(\mathbb{R}^m)}\rightarrow 0$ and $\|Z^n-Z\|_{{\cal H}^2(\mathbb{R}^{m\times d})}\rightarrow 0$, where $(Y^n, Z^n)$ is the solution of the BSDE \eqref{bsde} with delay given by $(\alpha_1^n, \alpha^n_2)$ and $(Y,Z)$ is the solution of BSDE without delay.
\end{corollary}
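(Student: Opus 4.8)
The plan is to recognize this statement as the special case $(\alpha_1,\alpha_2)=(\delta_0,\delta_0)$ of Theorem \ref{thm:Stability}, so that almost all the work is already done and the corollary reduces to checking that the hypotheses of that theorem are met by the limiting measures. First I would observe that substituting the Dirac mass at $0$ into \eqref{gamma} collapses the weighted past average to the present value, namely $\Gamma(s)=\left(u(s)Y_s,v(s)Z_s\right)$, so that the corresponding instance of \eqref{bsde} reads $Y_t=\xi+\int_t^Tg(s,u(s)Y_s,v(s)Z_s)\,ds-\int_t^TZ_s\,dW_s$. Since the generator now depends only on the current values of the solution, this is precisely a BSDE with no delay. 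Hence the limiting equation appearing in Theorem \ref{thm:Stability} when $(\alpha_1,\alpha_2)=(\delta_0,\delta_0)$ is exactly the delay-free BSDE whose solution is denoted $(Y,Z)$ in the statement.

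Next I would verify that all hypotheses of Theorem \ref{thm:Stability} hold with $\alpha_1=\alpha_2=\delta_0$. The Dirac measure $\delta_0$ is deterministic, finite-valued with total mass $\delta_0([-T,0])=1$, and supported on $\{0\}\subset[-T,0]$, so it satisfies \ref{a1}. By assumption the approximating measures $\alpha^n_i$ satisfy \ref{a1} and \eqref{con}, and the convergence hypothesis reads $\alpha^n_i([-T,0])\to 1=\delta_0([-T,0])$, so the target masses match. Finally the comparability hypothesis of the corollary, $\alpha^n_1\le\delta_0$ (or $\delta_0\le\alpha^n_1$) and $\alpha^n_2\le\delta_0$ (or $\delta_0\le\alpha^n_2$), is exactly the ordering condition required by Theorem \ref{thm:Stability} with $\alpha_i$ replaced by $\delta_0$.

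The only point requiring a moment's care is confirming that the limiting measure $\delta_0$ itself satisfies \eqref{con}, since Theorem \ref{thm:Stability} presupposes that the limit measures obey this bound. This follows by passing to the limit in the weak inequalities $K^2(\alpha^n_i([-T,0]))^2\norm{u}^2_{L^1(dt)}\le\tfrac{1}{25}$ and $K^2(\alpha^n_i([-T,0]))^2\norm{v}^2_{L^2(dt)}\le\tfrac{1}{25}$: because these are closed inequalities and $\alpha^n_i([-T,0])\to 1=\delta_0([-T,0])$, the same bounds persist in the limit, so $\delta_0$ satisfies \eqref{con}. With every hypothesis thus verified, Theorem \ref{thm:Stability} applies directly and delivers $\norm{Y^n-Y}_{\mathcal{S}^2(\mathbb{R}^m)}\to 0$ and $\norm{Z^n-Z}_{\mathcal{H}^2(\mathbb{R}^{m\times d})}\to 0$. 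I do not anticipate any genuine obstacle beyond this bookkeeping; the whole content of the corollary is the identification of the no-delay BSDE with the $\delta_0$-instance of \eqref{bsde}, after which it is an immediate specialization of the stability result.
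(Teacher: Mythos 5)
Your proposal is correct and coincides with the paper's own argument: the paper gives no separate proof, stating the corollary is a direct consequence of Theorem \ref{thm:Stability} applied with $\alpha_1=\alpha_2=\delta_0$, and your verification that $\delta_0$ satisfies \ref{a1} and inherits \eqref{con} by passing to the limit in the closed inequalities is exactly the observation already made inside the proof of that theorem. Your identification of the $\delta_0$-instance of \eqref{bsde} with the delay-free equation $Y_t=\xi+\int_t^Tg(s,u(s)Y_s,v(s)Z_s)\,ds-\int_t^TZ_s\,dW_s$ is likewise the intended reading.
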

We conclude this section with the following counterexample which shows that the condition $\alpha_1 \le \alpha^n_{1}$ (or $\alpha^n_{1}\leq \alpha_1$) and $\alpha_2 \le \alpha^n_{2}$ (or $\alpha^n_{2}\leq \alpha_2$) is needed in the above theorem.
\begin{example}
	Assume that $m=d=1$.
	We denote by $\delta_0$ and $\delta_{-1}$ the Dirac measures at $0$ and $-1$, respectively.
	It is clear that $\delta_0([-1,0]) = \delta_{-1}([-1,0])$.
	Consider the delay BSDEs
	\begin{equation}
	\label{eq:nodelay}
		Y_t = 1 + \int_t^11/5\left(\int_{-1}^0Y_{s+r} + Z_{s+r}\right)\delta_{0}(dr)ds - \int_0^1Z_s\,dW_s
	\end{equation}
	and
	\begin{equation}
	\label{eq:strongdelay}
		\bar{Y}_t = 1 + \int_t^11/5\left(\int_{-1}^0\bar{Y}_{s+r} + \bar{Z}_{s+r}\right)\delta_{-1}(dr)ds - \int_0^1\bar{Z}_s\,dW_s.
	\end{equation}
	Since BSDE \eqref{eq:strongdelay} takes the form $\bar{Y}_t=1 - \int_t^1\bar{Z}_u\,dW_s$, it follows that $\bar{Y}_t = 1$ for all $t\in [0,1]$.
	On the other hand, \eqref{eq:nodelay} is the standard BSDE without delay, its solution can be written as $Y_t= E[H^t_1\mid{\cal F}_t]$, where the deflator $(H^t_s)_{s\ge t}$ at time $t$ is given by $dH^t_s = -\frac{H^t_s}{5}(ds + dW_s)$.
	Thus, $Y_t = \text{exp}(-1/5(1-t))$ and for $t \in [0,1)$, $Y_t < \bar{Y}_t$.
\end{example}

\section{Reflected BSDEs with time-delayed generators}
\label{sec:reflected}
The probabilistic setting and the notation of the previous section carries over to the present one.
In particular, we fix a time horizon $T \in (0, \infty]$ and we assume $m=1$.
For $p\in [1, \infty)$, we further introduce the space ${\cal M}^p(\mathbb{R})$ of adapted c\`adl\`ag processes $X$ valued in $\mathbb{R}$ such that $\norm{X}_{{\cal M}^p}^p :=  E[(\sup_{t \in [0,T]}\abs{X_t})^p] < \infty$
and by ${\cal A}^p(\mathbb{R})$, we denote the subspace of elements of ${\cal M}^p(\mathbb{R})$ which are increasing processes starting at $0$.
Let $(S_t)_{t\in[0,T]}$ be a c\`adl\`ag adapted real-valued process.
In this section, we study existence of solutions $(Y, Z, K)$ of BSDEs reflected on the c\`adl\`ag barrier $S$ and with time-delayed generators.
That is, processes satisfying
\begin{eqnarray}
\label{rbsde}
	&Y_t&=\xi+\int_t^Tg(s,\Gamma(s))ds+K_T-K_t-\int_t^TZ_sdW_s,\quad t\in[0,T]\\
\label{eq:barrier}	&Y&\geq S\\
\label{eq:skor_cond}	&\int_{0}^T&(Y_{t-}-S_{t-})dK_t=0
\end{eqnarray}
with $\Gamma$ defined by \eqref{gamma}.
Consider the condition
\begin{enumerate}[label=(\textsc{A5}),leftmargin=40pt]
	\item $E\left[\sup_{0\leq t\leq T}(S^+_t)^2\right]<\infty$ and $S_T\leq\xi$.\label{a5}
\end{enumerate}
\begin{theorem}
	Assume \ref{a1}-\ref{a5}.
	If
	\begin{equation}\label{con1}
		\begin{cases}
			K^2\alpha^2_1([-T,0])\norm{u}_{L^1(dt)}^2\leq\frac{1}{36},\\
			K^2\alpha^2_2([-T,0])\norm{v}^2_{L^2(dt)}\leq\frac{1}{36},
		\end{cases}
 	\end{equation}
 	then RBSDE \eqref{rbsde} admits a unique solution $(Y,Z,K) \in {\cal M}^2(\mathbb{R})\times {\cal H}^2(\mathbb{R}^d)\times {\cal A}^2(\mathbb{R})$ satisfying
 	\begin{equation*}
		Y_t=\esssup_{\tau\in\mathcal{T}_t}E\left[\int_t^{\tau}g(s,\Gamma(s))ds+S_{\tau}\mathbf{1}_{\{\tau<T\}}+\xi\mathbf{1}_{\{\tau=T\}}\bigg|\mathcal{F}_t\right],
	\end{equation*}
	where $\mathcal{T}$ is the set of all stopping times taking values in $[0,T]$ and $\mathcal{T}_t=\{\tau\in\mathcal{T}:\tau\geq t\}$.
\end{theorem}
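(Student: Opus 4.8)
The plan is to obtain the solution by a Banach fixed point argument that decouples the delay, in exact analogy with the proof of Theorem~\ref{exi_bsde}. Given $(y,z)\in\mathcal{M}^2(\mathbb{R})\times\mathcal{H}^2(\mathbb{R}^{d})$, I freeze the delay by setting $\gamma(s):=\left(\int_{-T}^{0}u(s+r)y_{s+r}\alpha_1(dr),\int_{-T}^{0}v(s+r)z_{s+r}\alpha_2(dr)\right)$ and $f(s):=g(s,\gamma(s))$. The first estimate in the proof of Lemma~\ref{estimation}, which uses only \ref{a1}--\ref{a3} and \cite[Lemma 1.1]{dosReis}, shows $\int_0^T|f(s)|\,ds\in L^2$, so $f$ is an admissible driver not depending on the solution. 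I then invoke the classical theory of reflected BSDEs with a \cadlag\ obstacle: since $f$ is state-independent, the process $t\mapsto Y_t+\int_0^tf(s)\,ds$ is the Snell envelope of the \cadlag, class-$D$ reward $\int_0^{\tau}f(s)\,ds+S_{\tau}\mathbf{1}_{\{\tau<T\}}+\xi\mathbf{1}_{\{\tau=T\}}$, whose Doob--Meyer decomposition together with the martingale representation theorem yields a unique triple $(Y,Z,K)\in\mathcal{M}^2(\mathbb{R})\times\mathcal{H}^2(\mathbb{R}^{d})\times\mathcal{A}^2(\mathbb{R})$ solving \eqref{rbsde}--\eqref{eq:skor_cond} with this $f$, the process $Y$ being given by the announced essential-supremum formula with $\Gamma$ replaced by $\gamma$. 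Here \ref{a5} is exactly what is needed: $S_T\le\xi$ makes $Y_T=\xi\ge S_T$ consistent, while $E[\sup_t(S_t^+)^2]<\infty$ makes the envelope square-integrable. This defines a map $\Phi(y,z)=(Y,Z)$ from $\mathcal{M}^2(\mathbb{R})\times\mathcal{H}^2(\mathbb{R}^{d})$ into itself.

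The core of the argument is the a priori estimate showing $\Phi$ is a contraction. For two inputs $(y,z),(\bar y,\bar z)$ with drivers $f,\bar f$ and images $(Y,Z,K),(\bar Y,\bar Z,\bar K)$, the essential-supremum representation and the identical obstacle and terminal value give the pathwise bound $|Y_t-\bar Y_t|\le E[\int_t^T|f(s)-\bar f(s)|\,ds\mid\mathcal{F}_t]$, so Doob's $L^2$ maximal inequality yields $\|Y-\bar Y\|_{\mathcal{M}^2}^2\le 4\,E[(\int_0^T|f-\bar f|\,ds)^2]$. For the control process I apply It\^o's formula to $|Y-\bar Y|^2$ on $[0,T]$; since $Y_T-\bar Y_T=0$, discarding the nonnegative quantities $E[|Y_0-\bar Y_0|^2]$ and $E[\sum_{s\le T}(\Delta(K-\bar K)_s)^2]$ leaves
\[
E\Big[\int_0^T|Z_s-\bar Z_s|^2\,ds\Big]\le 2\,E\Big[\int_0^T(Y_{s-}-\bar Y_{s-})(f(s)-\bar f(s))\,ds\Big]+2\,E\Big[\int_0^T(Y_{s-}-\bar Y_{s-})\,d(K-\bar K)_s\Big].
\]
The two minimality conditions \eqref{eq:skor_cond}, together with $Y,\bar Y\ge S$ and the monotonicity of $K,\bar K$, force the last expectation to be nonpositive, so it is dropped; bounding the remaining term by Cauchy--Schwarz and inserting the $\mathcal{M}^2$ estimate gives $\|Z-\bar Z\|_{\mathcal{H}^2}^2\le 4\,E[(\int_0^T|f-\bar f|\,ds)^2]$ as well.

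It remains to combine these with the Lipschitz estimate established at the end of the proof of Lemma~\ref{estimation} (it uses only \ref{a1}--\ref{a3} and \cite[Lemma 1.1]{dosReis}), namely $E[(\int_0^T|f-\bar f|\,ds)^2]\le 2K^2\alpha_1^2([-T,0])\|u\|^2_{L^1(dt)}\|y-\bar y\|_{\mathcal{M}^2}^2+2K^2\alpha_2^2([-T,0])\|v\|^2_{L^2(dt)}\|z-\bar z\|_{\mathcal{H}^2}^2$. Adding the two previous bounds gives
\[
\|Y-\bar Y\|_{\mathcal{M}^2}^2+\|Z-\bar Z\|_{\mathcal{H}^2}^2\le 16K^2\alpha_1^2([-T,0])\|u\|^2_{L^1(dt)}\|y-\bar y\|_{\mathcal{M}^2}^2+16K^2\alpha_2^2([-T,0])\|v\|^2_{L^2(dt)}\|z-\bar z\|_{\mathcal{H}^2}^2,
\]
so that under \eqref{con1} the coefficients are at most $16/36=4/9<1$ and $\Phi$ is a contraction on the Banach space $\mathcal{M}^2(\mathbb{R})\times\mathcal{H}^2(\mathbb{R}^{d})$. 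Its unique fixed point satisfies $\gamma=\Gamma$, and the associated $K$ furnishes the unique solution $(Y,Z,K)$ with the stated representation.

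I expect the main obstacle to be the control-process estimate in the \cadlag\ setting: one must justify It\^o's formula while accounting for the jumps of $Y$ inherited from $K$ and from the downward jumps of $S$, and, above all, verify that the two minimality conditions combine to give the nonpositive sign of the reflection cross-term $\int_0^T(Y_{s-}-\bar Y_{s-})\,d(K-\bar K)_s$. This term has no counterpart in Theorem~\ref{exi_bsde}, and the more conservative threshold in \eqref{con1} leaves ample room to absorb the a priori constants coming from the reflection estimate.
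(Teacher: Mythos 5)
Your proposal is correct and follows essentially the same route as the paper: freeze the delayed arguments, solve the resulting reflected BSDE with state-independent driver (the paper cites \cite{Lep-Xu} for $T<\infty$ and \cite{ouknine} for $T=\infty$ where you construct the Snell envelope directly, so for the infinite-horizon case you should really invoke the latter result rather than ``classical theory''), then use the essential-supremum representation with Doob's inequality for $Y$, It\^o's formula plus the Skorokhod minimality conditions to kill the reflection cross-term for $Z$, and conclude by Banach's fixed point theorem under \eqref{con1}. The only cosmetic difference is that your Cauchy--Schwarz step yields the contraction constant $16K^2\alpha_i^2\norm{\cdot}^2\le 4/9$, marginally sharper than the paper's $18K^2\alpha_i^2\norm{\cdot}^2\le 1/2$ obtained via $2ab\le a^2+b^2$; both comfortably fit within \eqref{con1}.
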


\begin{proof}
For any given $(y,z)\in\mathcal{M}^2(\mathbb{R})\times\mathcal{H}^2(\mathbb{R}^d)$, similar to the proof of Lemma \ref{estimation}, we have
\begin{equation*}
E\left(\xi+\int_0^Tg(s,\gamma(s))ds\right)^2<\infty
\end{equation*}
with $\gamma$ defined as in Lemma \ref{estimation}.
Hence, from \cite[Theorem 3.3]{Lep-Xu} for $T<\infty$ and \cite[Theorem 3.1]{ouknine} for $T=\infty$ the reflected BSDE
\begin{equation*}
Y_t=\xi+\int_t^Tg(s,\gamma(s))ds+K_T-K_t-\int_t^TZ_sdW_s
\end{equation*}
with barrier $S$ admits a unique solution $(Y,Z,K)$ such that $(Y,Z)\in\mathcal{B}$, the space of processes $(Y,Z)\in\mathcal{M}^2(\mathbb{R})\times\mathcal{H}^2(\mathbb{R}^d)$ such that $Y\geq S$, and $K \in {\cal A}^2(\mathbb{R})$.
Moreover, $Y$ admits the representation
\begin{equation*}
Y_t=\esssup_{\tau\in\mathcal{T}_t}E\left[\int_t^{\tau}g(s,\gamma(s))ds+S_{\tau}\mathbf{1}_{\{\tau<T\}}+\xi\mathbf{1}_{\{\tau=T\}}\bigg|\mathcal{F}_t\right]\quad t \in [0,T].
\end{equation*}
Hence we can define a mapping $\Phi$ from $\mathcal{B}$ to $\mathcal{B}$ by setting $\Phi(y,z):=(Y,Z)$. Let $(y,z),(\bar{y},\bar{z})\in\mathcal{B}$ and $(Y,Z)=\Phi(y,z)$, $(\bar{Y},\bar{Z})=\Phi(\bar{y},\bar{z})$. From the representation, we deduce
\begin{align*}
&|Y_t-\bar{Y}_t|\\
&\leq \esssup_{\tau\in\mathcal{T}_t}E\left[\int_t^{\tau}|g(s,\gamma(s))-g(s,\bar{\gamma}(s))|ds\bigg|\mathcal{F}_t\right]\leq E\left[\int_0^{T}|g(s,\gamma(s))-g(s,\bar{\gamma}(s))|ds\bigg|\mathcal{F}_t\right].
\end{align*}
Doob's maximal inequality implies that
\begin{align*}
E\left[\sup_{0\leq t\leq T}|Y_t-\bar{Y}_t|^2\right] \leq 4E\left[\left(\int_0^{T}|g(s,\gamma(s))-g(s,\bar{\gamma}(s))|ds\right)^2\right].
\end{align*}
Applying It\^{o}'s formula to $|Y_t-\bar{Y}_t|^2$, we obtain
\begin{align*}
|Y_t-\bar{Y}_t|^2 &+\int_t^T|Z_s-\bar{Z}_s|^2ds = 2\int_t^T(Y_s-\bar{Y}_s)(g(s,\gamma(s))-g(s,\bar{\gamma}(s)))ds\\
&\quad +2\int_t^T(Y_{s-}-\bar{Y}_{s-})d(K_s-\bar{K}_s)-2\int_t^T(Y_s-\bar{Y}_s)(Z_s-\bar{Z}_s)dW_s\\
&=2\int_t^T(Y_s-\bar{Y}_s)(g(s,\gamma(s))-g(s,\bar{\gamma}(s)))ds-2\int_t^T(Y_s-\bar{Y}_s)(Z_s-\bar{Z}_s)dW_s\\
&\quad +2\int_t^T(Y_{s-}-S_{s-})dK_{s}-2\int_t^T(Y_{s-}-S_{s-})d\bar{K}_s-2\int_t^T(\bar{Y}_{s-}-S_{s-})dK_s\\
&\quad +2\int_t^T(\bar{Y}_{s-}-S_{s-})d\bar{K}_s.
\end{align*}
Since $(Y,K)$ and $(\bar{Y},\bar{K})$ satisfy \eqref{eq:barrier} and \eqref{eq:skor_cond}, we have
\begin{align*}
|Y_t-\bar{Y}_t|^2+\int_t^T|Z_s-\bar{Z}_s|^2ds& \leq2\int_t^T(Y_s-\bar{Y}_s)(g(s,\gamma(s))-g(s,\bar{\gamma}(s)))ds\\
&\quad-2\int_t^T(Y_s-\bar{Y}_s)(Z_s-\bar{Z}_s)dW_s.
\end{align*}
Hence
\begin{equation*}
E\left[\int_0^T|Z_s-\bar{Z}_s|^2ds\right]\leq E\left[\sup_{0\leq t\leq T}|Y_t-\bar{Y}_t|^2\right]+E\left[\left(\int_0^{T}|g(s,\gamma(s))-g(s,\bar{\gamma}(s))|ds\right)^2\right].
\end{equation*}
In view of the proof of Lemma \ref{estimation}, we deduce
\begin{align*}
\|Y-\bar{Y}\|^2_{\mathcal{M}^2(\mathbb{R})}&+\|Z-\bar{Z}\|^2_{\mathcal{H}^2(\mathbb{R}^{d})}\leq 9E\left[\left(\int_0^T|g(s,\gamma(s))-g(s,\bar{\gamma}(s))|ds\right)^2\right]\\
&\leq 18K^2\alpha^2_1([-T,0])\norm{u}^2_{L^1(dt)} \|y-\bar{y}\|^2_{\mathcal{M}^2(\mathbb{R})}\\
&\quad+18K^2\alpha^2_2([-T,0])\norm{v}^2_{L^2(dt)}\|z-\bar{z}\|^2_{\mathcal{H}^2(\mathbb{R}^{ d})}.
\end{align*}
By condition \eqref{con1}, $\Phi$ is a contraction mapping and therefore it admits a unique fixed point which combined with the associated process $K$ is the unique solution of the RBSDE \eqref{rbsde}.
\end{proof}

\section{Link to coupled FBSDEs}
\label{sec:fbsde}

In this section, we discuss the connection between BSDEs with time-delayed generators and FBSDEs.
We work in the probabilistic setting and with the notation of Section \ref{sec:1}.

Standard methods to solve BSDEs with quadratic growth in the control variable often rely either on boundedness of the control process, see for instance \cite{Richou2012a} and \cite{Cheridito2014}, or on BMO estimates for the stochastic integral of the control process, see for instance \cite{tevzadze}.
However, as shown in \cite{Del-Imk}, solutions of BSDEs with time-delayed generators do not, in general, satisfy boundedness and BMO properties so that new methods are required to solve quadratic BSDE with time-delayed generators.
Recently, \cite{briand-elie} obtained existence and uniqueness of solution for a quadratic BSDE with delay only in the value process.
We show below that using FBSDE theory, it is possible to generalize their results to multidimension and considering a different kind of delay.
Moreover, our argument allows to solve equations with generators of superquadratic growth.

Let $\alpha_1$ be the uniform measure on $[-T,0]$, $\alpha_2$ the Dirac measure at $0$. Put $u(s)=v(s)=1$, for $s\in[0,T]$.
	We are considering the following BSDE with time delay only in the value process:
\begin{equation}\label{ex1}
Y_t=\xi+\int_t^Tg(s,\int_0^sY_rdr,Z_s)ds-\int_t^TZ_sdW_s,\quad t\in[0,T].
\end{equation}
We denote by ${\cal D}^{1,2}$ the space of all Malliavin differentiable random variables and for $\xi \in {\cal D}^{1,2}$ denote by $D_t\xi$ its Malliavin derivative. We refer to \citet{Nualart2006} for a thorough treatment of the theory of Malliavin calculus, whereas the definition and properties of the BMO-space and norm can be found in \cite{Kazamaki01}.
We make the following assumptions:
\begin{enumerate}[label=(\textsc{B1}),leftmargin=40pt]
	\item $g:[0,T]\times\mathbb{R}^m\times\mathbb{R}^{m\times d}\to\mathbb{R}^m$ is a continuous function such that $g^i(y,z)=g^i(y,z^i)$ and there exists a constant $K>0$ as well as a nondecreasing function $\rho:\mathbb{R}_{+}\to\mathbb{R}_{+}$ such that
\begin{align*}
&|g(s,y,z)-g(s,y',z')|\leq K|y-y'|+\rho(|z|\vee|z'|)|z-z'|,\\
&|g(s,y,z)-g(s,y',z)-g(s,y,z')+g(s,y',z')|\leq K(|y-y'|+|z-z'|	)
\end{align*}
for all $s\in[0,T]$, $y,y'\in\mathbb{R}^m$ and $z,z'\in\mathbb{R}^{m\times d}$.\label{b1}
\end{enumerate}
\begin{enumerate}[label=(\textsc{B2}),leftmargin=40pt]
	\item $\xi$ is $\mathcal{F}_T$-measurable such that $\xi\in\mathcal{D}^{1,2}(\mathbb{R}^m)$ and there exist constants $A_{ij}\geq 0$ such that
\begin{equation*}
|D^j_t\xi^i|\leq A_{ij},\quad i=1,\ldots,m;~j=1,\ldots,d,
\end{equation*}
for all $t\in[0,T]$. \label{b2}
\end{enumerate}
\begin{enumerate}[label=(\textsc{B3}),leftmargin=40pt]
	\item $g:\Omega\times[0,T]\times\mathbb{R}^m\times\mathbb{R}^{m\times d}$ is measurable, $g(s,y,z)=f(s,z)+l(s,y,z)$ where $f$ and $l$ are measurable functions with $f^i(s,z)=f^i(s,z^i)$, $i=1,\ldots,m$ and there exists a constant $K\geq 0$ such that
\begin{align*}
&|f(s,z)-f(s,z')|\leq K(1+|z|+|z'|)|z-z'|,\\
&|l(s,y,z)-l(s,y',z')|\leq K|y-y'|+K(1+|z|^{\epsilon}+|z'|^{\epsilon})|z-z'|,\\
&|f(s,z)|\leq K(1+|z|^2),\\
&|l(s,y,z)|\leq K(1+|z|^{1+\epsilon}),
\end{align*}
for some $0\leq\epsilon<1$ and for all $s\in[0,T]$, $y,y'\in\mathbb{R}^{m}$ and $z,z'\in\mathbb{R}^{m\times d}$.\label{b3}
\end{enumerate}
\begin{enumerate}[label=(\textsc{B4}),leftmargin=40pt]
	\item $\xi$ is $\mathcal{F}_T$-measurable such that there exist a constant $K\geq 0$ such that $|\xi|\leq K$.\label{b4}
\end{enumerate}
\begin{enumerate}[label=(\textsc{B5}),leftmargin=40pt]
	\item $g:\Omega\times[0,T]\times\mathbb{R}\times\mathbb{R}^d\to\mathbb{R}$ is progressively measurable, continuous process for any choice of the spatial variables and for each fixed $(s,\omega)\in[0,T]\times\Omega$, $g(s,\omega,\cdot)$ is continuous. $g$ is increasing in $y$ and for some constant $K\geq 0$ such that
\begin{equation*}
|g(s,y,z)|\leq K(1+|z|),
\end{equation*}
for all $s\in[0,T]$, $y\in\mathbb{R}$ and $z\in\mathbb{R}^d$.\label{b5}
\end{enumerate}
\begin{enumerate}[label=(\textsc{B6}),leftmargin=40pt]
	\item $\xi$ is $\mathcal{F}_T$-measurable such that $\xi\in L^2$.\label{b6}
\end{enumerate}
\begin{enumerate}[label=(\textsc{B7}),leftmargin=40pt]
	\item $g:\Omega\times[0,T]\times\mathbb{R}\times\mathbb{R}^d\to\mathbb{R}$ is progressively measurable, continuous process for any choice of the spatial variables and for each fixed $(s,\omega)\in[0,T]\times\Omega$, $g(s,\omega,\cdot)$ is continuous. $g$ is increasing in $y$ and for some constant $K\geq 0$ such that
\begin{equation*}
|g(s,y,z)|\leq K(1+|z|^2),
\end{equation*}
for all $s\in[0,T]$, $y\in\mathbb{R}$ and $z\in\mathbb{R}^d$.\label{b7}
\end{enumerate}
\begin{proposition}
Assume $T \in (0, \infty)$.	
\begin{enumerate}
	\item If \ref{b1}-\ref{b2} are satisfied, then there exists a constant $C\geq 0$ such that for sufficiently small $T$, BSDE \eqref{ex1} admits a unique solution $(Y, Z)\in {\cal S}^2(\mathbb{R}^m)\times {\cal H}^2(\mathbb{R}^{m\times d})$ such that $\abs{Z} \le C$.
	\item If \ref{b3}-\ref{b4} are satisfied, then there exist constants $C_1,C_2 \geq 0$ such that for sufficiently small $T$, BSDE \eqref{ex1} admits a unique solution $(Y, Z) \in {\cal S}^2(\mathbb{R}^m)\times {\cal H}^2(\mathbb{R}^{m\times d})$ such that $\abs{Y} \le C_1$ and $\|Z\cdot dW\|_{\text{BMO}}\leq C_2$.
	\item If $m =d= 1$ and \ref{b5}-\ref{b6} are satisfied, then BSDE \eqref{ex1} admits at least a solution $(Y, Z)\in \mathcal{S}^2(\mathbb{R})\times\mathcal{H}^2(\mathbb{R}^d)$.
	\item If $m =d= 1$ and \ref{b4} and \ref{b7} are satisfied, then BSDE \eqref{ex1} admits at least a solution $(Y, Z)\in \mathcal{S}^2(\mathbb{R})\times\mathcal{H}^2(\mathbb{R}^d)$ such that $Y$ is bounded and $Z\cdot W$ is a BMO martingale.
\end{enumerate}
\end{proposition}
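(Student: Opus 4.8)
The plan is to recast Equation \eqref{ex1} as a coupled forward--backward system and then to invoke, case by case, the well-posedness theory of \citet{qfbsde}. Define the forward process
\begin{equation*}
	X_s := \int_0^s Y_r\,dr, \qquad s \in [0,T],
\end{equation*}
so that $X$ solves the degenerate forward equation $dX_s = Y_s\,ds$ with $X_0 = 0$ and vanishing diffusion coefficient. With this substitution the delay is absorbed into the new state variable, and $(Y,Z)$ solves \eqref{ex1} if and only if the triple $(X,Y,Z)$ solves the coupled FBSDE
\begin{equation*}
	\begin{cases}
		X_s = \int_0^s Y_r\,dr,\\
		Y_t = \xi + \int_t^T g(s,X_s,Z_s)\,ds - \int_t^T Z_s\,dW_s, \quad t \in [0,T].
	\end{cases}
\end{equation*}
The backward generator is now a genuine (non-delayed) function of the spatial variables $(X,Z)$, coupled to the forward equation only through the bounded-variation drift $Y$. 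The whole proposition then reduces to verifying that, under each hypothesis set, the assumptions of the relevant theorem of \citet{qfbsde} are satisfied, and to translating its conclusions back through $X_s = \int_0^s Y_r\,dr$.

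For part (1), assumption \ref{b1} provides exactly the locally Lipschitz, possibly superquadratic, regularity of $g$ in $(y,z)$ needed there, while \ref{b2}---Malliavin differentiability of $\xi$ with $\abs{D^j_t\xi^i}\le A_{ij}$---is precisely the input that produces a uniform bound on the control process via the identification $Z_t = D_tY_t$. I would run the fixed-point argument of \citet{qfbsde}: freezing the forward path, one solves the resulting non-delayed BSDE---which, thanks to the Malliavin bound in \ref{b2}, admits a solution with bounded control even in the superquadratic regime---recovers $Y$, and reconstructs $X$; this map is a contraction for small $T$ because the coupling enters only through $\int_0^sY_r\,dr$, whose modulus is bounded by $T\sup_{r}\abs{Y_r}$, so the forward feedback is of order $T$. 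The pointwise bound $\abs{Z}\le C$ is then the a priori estimate $\norm{Z}_\infty \le C(K,\rho,A_{ij})$ obtained by differentiating the system in the Malliavin sense. Part (2) is parallel: the splitting $g = f + l$ of \ref{b3}, with $f$ of quadratic and $l$ of strictly sub-quadratic ($1+\epsilon$) growth in $z$, together with the bounded terminal value of \ref{b4}, places the equation in the BMO framework of \citet{qfbsde} and yields, again for small $T$, a solution with bounded $Y$ and $Z\cdot W \in \mathrm{BMO}$.

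Parts (3) and (4) specialise to $m = d = 1$, where the monotonicity of $g$ in $y$---equivalently in the forward state $X$---is available and removes the need for smallness of $T$. Here I would appeal to the global existence results of \citet{qfbsde} for FBSDEs whose backward generator is monotone in the forward variable; the required solution is produced not by contraction but by an approximation and compactness argument, the forward integration $Y \mapsto \int_0^\cdot Y_r\,dr$ supplying the smoothing that makes the relevant fixed-point map compact. Under \ref{b5}--\ref{b6} (linear growth in $z$, $L^2$ terminal datum) this delivers a solution in $\mathcal{S}^2(\mathbb{R})\times\mathcal{H}^2(\mathbb{R}^d)$, while under \ref{b4} and \ref{b7} (quadratic growth in $z$, bounded terminal datum) it delivers a solution with $Y$ bounded and $Z\cdot W$ a BMO martingale. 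Uniqueness is not asserted in either case, consistently with the fact that such compactness and monotonicity arguments do not force it.

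The main obstacle I anticipate is confirming that the \emph{degenerate} forward equation, whose diffusion coefficient vanishes identically, genuinely falls within the scope of the cited results: much of FBSDE and Malliavin theory is phrased for non-degenerate forward dynamics, and one must check that the a priori, BMO, and monotonicity estimates of \citet{qfbsde} do not tacitly use non-degeneracy of the diffusion coefficient. Closely related is the need to ensure that the superquadratic growth permitted by the general modulus $\rho$ in \ref{b1} is truly covered by the bounded-$Z$ estimate, rather than only the quadratic case. The remaining work---bounding the forward state via $\abs{X_s}\le T\sup_r\abs{Y_r}$ and transcribing the control bounds back to \eqref{ex1}---is routine once the applicability of the reference is secured.
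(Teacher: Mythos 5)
Your proposal is correct and takes essentially the same route as the paper: the paper's entire proof consists of the substitution $X_t=\int_0^t Y_s\,ds$, rewriting \eqref{ex1} as the coupled FBSDE \eqref{fbsde}, and citing the FBSDE literature for well-posedness in each of the four cases. The one discrepancy is bibliographic: the paper obtains parts 3 and 4 from the continuous/monotone-coefficient results of \cite{Ant-Ham}, not from \cite{qfbsde}, which it invokes only for parts 1 and 2 --- your description of the compactness-and-monotonicity argument matches that reference, so only the attribution needs correcting.
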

\begin{proof}
	Define the function $b:\mathbb{R}^m\to \mathbb{R}^m$ by setting for $y\in\mathbb{R}^m$, $b^i(y)=y^i$, $i=1,\ldots,m$.
	For $t\in[0,T]$, put
\begin{equation*}
X_t=\int_0^tb(Y_s)ds.
\end{equation*}
Thus BSDE \eqref{ex1} can be written as the coupled FBSDE
\begin{equation}\label{fbsde}
\begin{cases}
X_t=\int_0^tb(Y_s)ds,\\
Y_t=\xi+\int_t^Tg(s,X_s, Z_s)ds-\int_t^TZ_sdW_s
\end{cases}
\end{equation}	
so that 1. and 2. follow from \cite{qfbsde}, and 3. and 4. from \cite{Ant-Ham}.
\end{proof}
The above theorem provides an explanation why it is not enough to solve a time-delayed BSDE backward in time, one actually needs to consider both the forward and backward parts of the solution due to the delay.


\bibliographystyle{abbrvnat}
\bibliography{reference-tdbsde}

\end{document}